\theoremstyle{definition}
\newtheorem{definition}{Definition}
\newtheorem{theorem}{Theorem}
\newtheorem{lemma}{Lemma}
\newtheorem{assumption}{Assumption}
\newtheorem{problem}{Problem}
\newtheorem{remark}{Remark}
\title{\LARGE \bf
A Decentralized Control Framework for Energy-Optimal Goal Assignment and Trajectory Generation
}
\author{Logan E. Beaver, \textit{Student Member, IEEE}, Andreas A. Malikopoulos, \textit{Senior Member, IEEE}
	\thanks{This research was supported in part by ARPAE's NEXTCAR program under the award number DE-AR0000796 and by the Delaware Energy Institute (DEI).}
	\thanks{The authors are with the Department of Mechanical Engineering at the University of Delaware in Newark, DE 19716, USA
		{ (emails: \texttt{lebeaver@udel.edu}; \texttt{andreas@udel.edu)}} }%
}
\begin{document}

\maketitle
\thispagestyle{empty}

\begin{abstract}
	This paper proposes a decentralized approach for solving the problem of moving a swarm of agents into a desired formation. We propose a  decentralized assignment algorithm which prescribes goals to each agent using only local information. The assignment results are then used to generate energy-optimal trajectories for each agent which have guaranteed collision avoidance through safety constraints. We present the conditions for optimality and discuss the robustness of the solution. The efficacy of the proposed approach is validated through a numerical case study to characterize the framework's performance on a set of dynamic goals.
\end{abstract}

\section{INTRODUCTION}
\subsection{Motivation}
Complex systems are encountered in many applications, including cooperative autonomous agents, sensor fusion, and biological systems.
Referring to something as complex implies that it consists of interconnected agents which adapt and respond to their local and global environment.
As we move towards increasingly complex systems \cite{Malikopoulos2015},
new control approaches are needed to optimize the impact on system
behavior of the interaction between its entities \cite{Malikopoulos2015b,Malikopoulos}. 

Robotic swarm systems can exhibit complex behavior and have attracted considerable attention in many applications, e.g., transportation\cite{Malikopoulos2018, Malikopoulos2020a, Ren2007}, construction \cite{Lindsey2012, Joshi2014}, and surveillance\cite{Cort??s2009}. 
A common requirement for swarms is to move into a desired formation. However, due to cost constraints imposed on individual agents in a swarm, e.g., limited computation capabilities, battery capacity, and sensing abilities, any efficient control approach must take into account energy consumption. The task of moving in a specified formation has been explored in the literature  \cite{Oh2017,Oh2015,Brambilla2013}. However, achieving formations with minimum energy consumption during operation has not yet been thoroughly investigated. 


Several approaches to building cohesive formations in robotic systems have been proposed, such as formations built from triangular sub-structures \cite{Guo2010, Hanada2007}, where a scalable formation is achieved through the construction of a series of isosceles triangles. Methods inspired by crystal growth \cite{Song} and lattices structures \cite{Lee2008} have also shown promise. 
Other control methods using only scalar, bearing, or distance measurements were presented by Swartling et al. \cite{Swartling2014}. This approach was generalized to include the case where only a single leader agent was able to make distance or bearing measurements.

The problem of generating a desired formation was solved via scheduling by Turpin et al. \cite{Turpin}, where an initial assignment is achieved using a scheduling-based heuristic run on a central computer with global information. A significant amount of work, e.g., Wang and Xin \cite{Wang2013}, Sun and Cassandras \cite{Sun}, Xu and Carrillo \cite{Xu2015}, and Rajasree and Jisha \cite{Rajasree}, used optimization techniques in their solutions. However, these methods optimized the 
position of each agent in a virtual potential field and did not consider energy consumption by individual agents. 




The contribution of this paper is an assignment and trajectory generation algorithm which uses only local information for each agent. Other approaches, such as those by Turpin et al. \cite{Turpin2014}, Morgan et al. \cite{Morgan2016}, or Rubenstein et al. \cite{Rubenstein2012}, required global information in terms of a priori assignment, characteristics about the communication network size, or specifically oriented seed agents, respectively. 
Our proposed formulation is valid for any feasible initial and final conditions, requiring only that the initial and final positions be non-overlapping.
In addition, the formulation does not rely on potential fields \cite{Wang2013,Sun,Xu2015}, and instead produces energy-optimal trajectories which use proactive steering to avoid collisions.


The remainder of this paper proceeds as follows. In Section \ref{sec:problem}, we formulate the decentralized optimal control problem for each agent. In Section \ref{sec:solution}, we provide the problem formulation and solution approach of the assignment and trajectory generation and discuss implications on robustness. We present a numerical case study in Section \ref{sec:simulation}, which shows the behavior of the proposed method. Finally, we draw concluding remarks and discuss some ideas about future work in Section \ref{sec:conclusion}.


\section{PROBLEM FORMULATION} \label{sec:problem}

We consider the set $\mathcal{A} = \{1, \dots, N\},\, N \in \mathbb{N}_{>0},$ to index a system of autonomous agents in $\mathbb{R}^2$. The agents are moving into a desired formation indexed by a set of $\mathcal{F} = \{1, \dots, M\},\, M \in \mathbb{N}_{>0},$ goals. We consider the case where $N \leq M$, i.e., no redundant agents are brought to fill the formation, as shown in Fig. \ref{fig:introFigure}. This requirement can be relaxed by defining a behavior for excess agents, such as idling \cite{Turpin2014}. 

\begin{figure}[ht]
	\centering
	\includegraphics[width=0.75\linewidth]{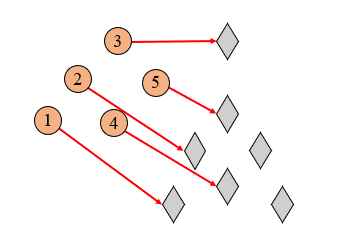}
	\caption{A group of $N=5$ agents entering a formation consisting of $M=7$ goals in $\mathbb{R}^2$.}
	\label{fig:introFigure}
\end{figure}

Each agent, $i\in\mathcal{A}$, is modeled as a double integrator 
\begin{align}
\mathbf{\dot{p}}_i(t) &=  \mathbf{v}_i(t), \label{eqn:pDynamics} \\
\mathbf{\dot{v}}_i(t) &=  \mathbf{u}_i(t),\label{eqn:vDynamics}
\end{align}
where $\mathbf{p}_i(t)\in\mathbb{R}^2$ and $\mathbf{v}_i(t)\in\mathbb{R}^2$ are the time-varying position and velocity vectors respectively, and $\mathbf{u}_i(t)\in\mathbb{R}^2$ is the control input over time $t\in[0, T_i]$, where $T_i\in \mathbb{R}_{>0},$ is the arrival time for agent $i$ to its assigned goal. Each agent's velocity and control input are bounded, namely,
\begin{align}
	v_{\min} \leq ||\mathbf{v}_i(t)|| \leq v_{\max},
	\label{eqn:vBounds}\\
	u_{\min} \leq ||\mathbf{u}_i(t)|| \leq u_{\max},
	\label{eqn:uBounds}
\end{align}
where $||\cdot||$ denotes the Euclidean norm, and $v_{\min}$, $v_{\max}$ and $u_{\min}$, $u_{\max}$ are the minimum and maximum allowable speed and control input respectively for each agent $i\in\mathcal{A}$. The state of each agent is the time-varying vector
\begin{equation}
\mathbf{x}_i(t) =
\begin{bmatrix}
\mathbf{p}_i(t) \\ \mathbf{v}_i(t)
\end{bmatrix}.
\end{equation}

Our objective is to develop a framework for the $N$ agents to optimally, in terms of energy, create any feasible formation of $M$ points while avoiding collisions between agents.
The energy consumption of each agent $i\in\mathcal{A}$ is given by
\begin{equation} \label{eqn:energy}
\dot{E}_i(t) = \frac{1}{2} ||\mathbf{u}_i(t))||^2.
\end{equation}
By minimizing the $L^2$ norm of the control input (acceleration/deceleration) we will have direct benefits in energy consumption.
\begin{definition} \label{def:goals}
	The \textit{desired formation} is the set of time-varying goals $\mathcal{G}(t) = \{\mathbf{p}_k(t) : \mathbb{R} \to \mathbb{R}^2 ~ | ~ k \in \mathcal{F}\}$. The set $\mathcal{G}$ can be prescribed offline, i.e., by a human  designer, or online by a high-level planner.
\end{definition}

Next, we present our modeling framework, which outlines the approach and assumptions used to solve the minimum energy desired formation problem.

\subsection{Modeling Framework}

In this framework, the agents can communicate with each other. The maximum sensing and communication range, $h\in\mathbb{R}_{>0}$, is used to define a neighborhood for each agent.

\begin{definition} \label{def:neighborhood}
	The \textit{neighborhood} of agent $i\in\mathcal{A}$ is defined as the time-varying set 
	\begin{equation*}
	\mathcal{N}_i(t) = \{j\in\mathcal{A} ~ | ~ \big|\big|\textbf{p}_i(t) - \textbf{p}_j(t) \big|\big| \leq h\}.
	\end{equation*}
\end{definition}

An agent $i\in\mathcal{A}$ is able to measure the relative position of any neighboring agent $j\in\mathcal{N}_i$. This leads to a natural definition of the scalar separating distance.

\begin{definition} \label{def:separating}
	The \textit{scalar separating distance} is defined as
	\begin{equation*}
	r_{ij}(t) = \big|\big|\mathbf{p}_i(t) - \mathbf{p}_j(t)\big|\big|.
	\end{equation*}
\end{definition}

Each agent $i\in\mathcal{A}$ occupies a closed disk of radius $R$. To guarantee no collisions between any agents $i,j\in\mathcal{A}$, we impose the following conditions on the system
\begin{align} \label{eqn:collisionCondition}
r_{ij}(t) &> 2R,\,\,  t\in\mathbb{R}_{>0}, \\
h &>> 2R. \label{eqn:sensingCondition}
\end{align}
To ensure each goal in the formation is feasible, the following condition should hold
\begin{equation} \label{eqn:formationSpacing}
    \min_{\mathbf{p}(t),\mathbf{q}(t) \in\mathcal{G}}\{||\mathbf{p}(t) - \mathbf{q}(t) ||\} <2R, ~  t\in\mathbb{R}_{>0}.
\end{equation}



In our modeling framework we impose the following assumptions:
\begin{assumption} \label{smp:perfect}
	The state $\mathbf{x}_i(t)$ for each agent $i\in\mathcal{A}$ is perfectly observed and there is negligible communication delay between the agents. 
\end{assumption}

Assumption \ref{smp:perfect} is required to evaluate the idealized deterministic performance of the generated optimal solution.

\begin{assumption} \label{smp:identical}
	All agents are homogeneous, and any agent may fill any goal in the formation.
\end{assumption}

This assumption simplifies the trajectory generation and assignment problems, and it can generally be relaxed by adding goal types as a constraint on the goal assignment.

\begin{assumption} \label{smp:energy}
	The energy cost of communication is negligible; the only energy consumption is in the form of (\ref{eqn:energy}). 
\end{assumption}

The strength of this assumption is application dependent. For cases with long-distance communications or high data rates, the trade-off for communication cost can be controlled by the selection of $h$.

Under this framework, the energy-optimal desired formation problem can be solved. This problem can be decomposed into two coupled subproblems: (1) goal assignment and (2) trajectory generation. Both of these problems are described in the following section, with emphasis on the goal assignment.

\section{Solution Approach} \label{sec:solution}
The decentralized desired formation problem is solved by decomposing it into the coupled goal assignment and trajectory generation subproblems. To decouple these problems the minimum energy objective in the assignment problem is approximated by the minimum Euclidean distance. Prior work, \cite{Turpin2014, Morgan2016}, has shown that this approximation is generally sufficient. This enables the assignment problem to be solved independently, which results in the endpoint constraints for the minimum-energy trajectory generation.

\subsection{Assignment Problem}
The objective of the assignment problem is to assign each agent to a goal such that the total distance traveled by all agents is minimized. In the decentralized case, each agent $i\in\mathcal{A}$ only has information about the positions of its neighbors, $j\in\mathcal{N}_i$, and the available goals, $\mathcal{G}$. A local assignment can be realized with the use of a local assignment matrix, $\mathbf{A}_i$,
\begin{equation}
	\begin{pmatrix}
		\mathbf{p}_1(T_1) \\ \mathbf{p}_2(T_2) \\ \vdots \\  \mathbf{p}_n(T_n)
	\end{pmatrix}
	=
	\mathbf{A}_i
	\begin{pmatrix}
		g_1 \\ g_2 \\ \vdots \\ g_M
	\end{pmatrix},
	\label{eqn:assignment}
\end{equation}
where $\mathbf{p}_j(T_j),~j\in\mathcal{N}_i$ are the final positions of each agent being assigned, $g_k, ~ k\in\mathcal{F},$ are the indices for each goal, and the elements $a_{jk}\in \mathbf{A}_i$ are binary assignment variables.
Each agent $i\in\mathcal{A}$ can solve (\ref{eqn:assignment}) independently as a linear program, and use the solution to select the prescribed goal.

\begin{definition} \label{def:prescribedGoal}
	For each agent $i\in\mathcal{A}$ the \textit{prescribed goal}, $\mathbf{p}^a_i(t)$, is defined as the goal assigned to agent $i$ for which
	\begin{equation}
		\mathbf{p}_i^a(t) \in \{\mathbf{p}_k \in \mathcal{G} ~|~ k\in\mathcal{F}, ~ a_{ik}=1,~ a_{ik} \in\mathbf{A}_i\},
	\end{equation}
	where the right hand side is a singleton set.
\end{definition}

It is possible for multiple agents to have the same prescribed goal. This occurs when two agents $i\in\mathcal{A},~ j\in\mathcal{N}_i,$ have different neighborhoods and use conflicting information to solve their assignment problem. This conflict is resolved by introducing the \textit{banned goal} set, defined next.
\begin{definition}\label{def:bannedGoal}
For any agent $i\in\mathcal{A}$, the \textit{banned goal} set is defined as the set $\mathcal{B}_i \subset \mathcal{G}$ which consists of all goals that agent $i$ is \emph{permanently banned from} when solving the goal assignment (\ref{eqn:assignment}).
\end{definition}

The following definitions and algorithm are presented for some agent $i\in\mathcal{A}$. However, all steps are performed \emph{simultaneously by all agents}.
For this agent $i$, the banned goal set is initially empty. Goals may be added to this set whenever 
the following condition is not satisfied
\begin{equation}
	\begin{aligned}
		\mathbf{p}^a_i(t) \neq \mathbf{p}^a_j(t),\,\, \forall j\in \mathcal{N}_i(t).
	\end{aligned}
	\label{eqn:NoConflicts}
\end{equation}

In the case that (\ref{eqn:NoConflicts}) is not satisfied, some agent(s) must be \emph{permanently banned} from the conflict goal, defined for agent $i$ as
\begin{equation} \label{eqn:conflictGoal}
    \mathbf{p}_c(t) \coloneqq \mathbf{p}_i^a(t).
\end{equation}
Banning is achieved by sequential application of ``tiebreaker" heuristics which compare:
\begin{enumerate}
    \item the size of each agent's neighborhood,
    \item the distance between each agent and the goal, and
    \item the index of each agent.
\end{enumerate}
Since the metrics of criteria 1, 2, and 3 are perfectly measurable (Assumption \ref{smp:perfect}), it follows that all agents must agree on the tiebreaker resolutions.
The tiebreaker hierarchy allows the banned goal set to be broken into three partitions,
\begin{equation}\label{eqn:banPartitions}
	\mathcal{B}_i(t) = \mathcal{B}^1_i(t) \cup \mathcal{B}^2_i(t) \cup \mathcal{B}^3_i(t),
\end{equation}
where superscripts $1$, $2$, and $3$ refer to the three tiebreakers, respectively. The tiebreakers are performed by all agents in the set of competing agents, defined next.
\begin{definition}\label{def:competingAgents}
	The set of \textit{competing agents} for agent $i\in\mathcal{A}$ is defined as
	\begin{equation*}
		\mathcal{C}_i(t) = \Big\{k\in\mathcal{N}_i(t) ~|~ \mathbf{p}_a^k(t) = \mathbf{p}_c(t) \Big\}.
	\end{equation*}
\end{definition}
When $|\mathcal{C}_i| > 1$ there are at least two agents, $i,j\in\mathcal{N}_i$ assigned to $\mathbf{p}_c$. Similarly to (\ref{eqn:banPartitions}), the set of competing agents can be split into three decreasing subsets,
\begin{align}
     \mathcal{C}_i^3 \subseteq \mathcal{C}_i^2 \subseteq \mathcal{C}_i^1 = \mathcal{C}_i
\end{align}
where the superscripts 1, 2, and 3 correspond to the agents which are comparing the three tiebreaker heuristics.


For each agent $i\in\mathcal{A}$, the banned goal sets partitions in \eqref{eqn:banPartitions} are defined as
\begin{align} \label{eqn:bannedUpdate}
\mathcal{B}^m_i(t) = &\Big\{\bigcup_{\tau=0}^t \Big(\{\mathbf{p}_i^a(\tau)\} \cap \Phi_i^m(\tau)\Big)\Big\},
\end{align}
where $\Phi^m_i(t)$ is given by
\begin{equation} \label{eq:indicators}
\Phi_i^m(t) =
\begin{cases}
\mathcal{G}, & \text{if } m=1, ~ i\neq \underset{j\in\mathcal{C}_i^1(t)}{\text{argmax}}\{|\mathcal{N}_j|(t)\}, \\
\mathcal{G}, & \text{if } m=2, ~ i\neq \underset{j\in\mathcal{C}_i^2(t)}{\text{argmax}}\{||\mathbf{p}_c(t)-\mathbf{p}_j(t)||\}, \\
\mathcal{G}, & \text{if } m=3, ~ i\neq \underset{j\in\mathcal{C}_i^3(t)}{\text{argmin}}\{j\}, \\

\emptyset, & \text{otherwise}.
\end{cases}
\end{equation}
where $m\in\{1, 2, 3\}$ again corresponds to the three tiebreaker heuristics.

To begin the tiebreaker process for agent $i\in\mathcal{A}$, consider the first conflict set $\mathcal{C}^1_i$ with the neighborhood heuristic. Every agent $j\in\mathcal{C}_i^1$ which satisfies
\begin{equation}\label{eqn:neighborGoalCondition}
	j = \arg\max_{k\in\mathcal{C}_i^1}\{|\mathcal{N}_k(t)|\},
\end{equation}
is eligible to be assigned to goal $\mathbf{p}_c$. If agent $i\in\mathcal{A}$ uniquely satisfies \eqref{eqn:neighborGoalCondition}, then the conflict test is complete and $i$ is assigned to $\mathbf{p}_c$. If $i$ does not satisfy \eqref{eqn:neighborGoalCondition}, then the goal $\mathbf{p}_c$ is added to $\mathcal{B}^1_i(t)$ as designated by \eqref{eq:indicators}. Finally, if agent $i$ does not uniquely satisfy \eqref{eqn:neighborGoalCondition} then the second criteria, distance to goal, must be compared. This comparison is done over a reduced conflict set,
\begin{equation}
	\mathcal{C}_i^2(t) = \Big\{j\in\mathcal{C}_i^1(t) ~|~ |\mathcal{N}_j(t)| = |\mathcal{N}_i(t)|  \Big\}.
\end{equation}

The second tiebreaker, maximum distance, is a minimax strategy which seeks to minimize the maximum distance traveled by any agent to the conflict goal. Again, every agent $j\in\mathcal{C}_i^2$ which satisfies
\begin{equation}\label{eqn:claimDistCondition}
	j = \underset{k\in\mathcal{C}_i^2}{\text{argmax}}\{||\mathbf{p}_k - \mathbf{p}_c||\},
\end{equation}
is eligible to be assigned to goal $\mathbf{p}_c$. If agent $i$ uniquely satisfies \eqref{eqn:claimDistCondition}, then the conflict test is complete and $i$ is assigned to $\mathbf{p}_c$. If $i$ does not satisfy \eqref{eqn:claimDistCondition}, then the goal $\mathbf{p}_c$ is added to $\mathcal{B}^2_i$ per \eqref{eq:indicators}. Finally, if $i$ satisfies \eqref{eqn:claimDistCondition}, but not uniquely, the final test must be taken over a further reduced conflict set, given by
\begin{equation}
	\mathcal{C}_i^3(t) = \Big\{j\in\mathcal{C}_i^2(t) ~|~ ||\mathbf{p}_c - \mathbf{p}_j|| = ||\mathbf{p}_c - \mathbf{p}_i||  \Big\},
\end{equation}
where the agent $k$ satisfying
\begin{equation} \label{eqn:idAgent}
k = \min\Big\{ j\in\mathcal{C}_i^3 \Big\},
\end{equation}
is assigned to the goal, and all other agents add $\mathbf{p}_c$ to $\mathcal{B}_i^3$ as designated by \eqref{eq:indicators}.
After the conflicts are resolved, if the size of $B_i$ has increased then the value of $T_i$ must also increase to
\begin{equation}
    T_i = t + T,
\end{equation}
where $t$ is the current time, and $T$ is a system parameter. This allows agent $i$ a sufficient amount of time to reach its new goal.
Finally, for each subsequent assignment involving agent $i\in\mathcal{A}$, when $\mathcal{B}_i(t) \neq \emptyset$ agent $i$ must broadcast its banned goal set to all $j\in\mathcal{N}_i$.

The assignment and banning process is iterated by all $j\in\mathcal{N}_i$ until (\ref{eqn:NoConflicts}) is satisfied in the entire neighborhood. The banned and restricted goal information is enforced through a constraint on the assignment problem, which follows.

\begin{problem}[Goal Assignment] \label{prb:assignment}
	Each agent assigns itself a goal independently by solving the linear minimum-distance assignment (\ref{eqn:assignment}).
	
	For each agent $i\in\mathcal{A}$, we have
	\begin{align}
	\underset{a_{jk}\in\mathbf{A}_i}{\text{min}} \Bigg\{
	\sum_{k\in\mathcal{N}_i} \sum_{j\in\mathcal{G}} a_{jk} \big|\big|\mathbf{p}_k(t) - \mathbf{p}_j^*(T_k)\big|\big| \Bigg\},\\
	 \mathbf{p}_k^0 \in \mathcal{N}_i,\, \mathbf{p}_j^*(t) \in \mathcal{G},\nonumber
	\end{align}
	subject to
	\begin{align}
			 \sum_{j\in\mathcal{G}} a_{jk} &= 1, ~~~ k\in\mathcal{N}_i, \label{eqn:p11} \\
			 \sum_{k\in\mathcal{N}_i} a_{jk} &\leq 1, ~~~ j\in\mathcal{G},\label{eqn:p12}\\
			 a_{jk} &= 0, ~~~ k\in\mathcal{N}_i, ~ \mathbf{p}_j\in\mathcal{B}_k,\label{eqn:p13} \\
			 a_{jk} &\in \{0, 1\}.\nonumber
	\end{align}
	
	Each agent independently solves Problem \ref{prb:assignment} as a linear program and selects its assigned goal. This process is repeated by each agent, $i\in\mathcal{A}$, until $|\mathcal{C}_i|=1$.
\end{problem}

As the safety constraints of Problem \ref{prb:assignment} explicitly depend on the neighborhood of agent $i\in\mathcal{A}$, the optimization must be recalculated each time the cardinality of the neighborhood of agent $i$ changes. Under weak assumptions about the trajectories of each agent, the assignments generated by Problem \ref{prb:assignment} is guaranteed to bring each agent to a unique goal as it is shown next.

\begin{lemma} \label{lma:solutionExistance}
	For every agent $i\in\mathcal{A}$, if $\big|\big(\bigcup_{j\in\mathcal{N}_i} \mathcal{B}_j\big) \setminus \mathcal{G}\big| \geq |\mathcal{N}_i|$, then the feasible region of Problem \ref{prb:assignment} is always nonempty.
\end{lemma}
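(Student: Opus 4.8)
The plan is to reinterpret the feasible region of Problem~\ref{prb:assignment} as the set of matchings that saturate $\mathcal{N}_i$ in an appropriate bipartite graph, and then invoke Hall's marriage theorem. First I would build the bipartite graph $H$ whose two vertex classes are the agents $k\in\mathcal{N}_i$ and the goals $\mathbf{p}_j\in\mathcal{G}$, with an edge between agent $k$ and goal $\mathbf{p}_j$ exactly when $\mathbf{p}_j\notin\mathcal{B}_k$; this is precisely the set of variables $a_{jk}$ left free by constraint~\eqref{eqn:p13}. Because the coefficient matrix of \eqref{eqn:p11}--\eqref{eqn:p13} is the incidence matrix of a bipartite graph it is totally unimodular, so the linear relaxation of the $\{0,1\}$ program has integral vertices; hence the feasible region of Problem~\ref{prb:assignment} is nonempty if and only if $H$ admits a matching that saturates every agent in $\mathcal{N}_i$. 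Here \eqref{eqn:p11} forces each agent to be matched, while \eqref{eqn:p12}, being an inequality, merely allows goals to go unmatched, which is consistent with $N\le M$.

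Next I would apply Hall's theorem to $H$: a matching saturating $\mathcal{N}_i$ exists if and only if for every $S\subseteq\mathcal{N}_i$ the available-goal set $\Gamma(S)\coloneqq\bigcup_{k\in S}\big(\mathcal{G}\setminus\mathcal{B}_k\big)$ satisfies $|\Gamma(S)|\ge|S|$. To verify this I would use the hypothesis of the lemma. For any nonempty $S\subseteq\mathcal{N}_i$ and any $k\in S$ we have $\mathcal{G}\setminus\bigcup_{j\in\mathcal{N}_i}\mathcal{B}_j\subseteq\mathcal{G}\setminus\mathcal{B}_k$, hence $\Gamma(S)\supseteq\mathcal{G}\setminus\bigcup_{j\in\mathcal{N}_i}\mathcal{B}_j$ and therefore $|\Gamma(S)|\ge\big|\mathcal{G}\setminus\bigcup_{j\in\mathcal{N}_i}\mathcal{B}_j\big|\ge|\mathcal{N}_i|\ge|S|$, the middle inequality being exactly the assumed bound. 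Thus Hall's condition holds, a saturating matching $\mathcal{M}$ exists, and setting $a_{jk}=1$ precisely when $(k,\mathbf{p}_j)\in\mathcal{M}$ yields a binary matrix $\mathbf{A}_i$ satisfying \eqref{eqn:p11}--\eqref{eqn:p13}; the feasible region is nonempty.

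I expect the only real subtlety to be that the cardinality condition as printed, $\big|\big(\bigcup_{j\in\mathcal{N}_i}\mathcal{B}_j\big)\setminus\mathcal{G}\big|\ge|\mathcal{N}_i|$, is vacuous, since $\mathcal{B}_j\subseteq\mathcal{G}$ forces the left-hand side to be $0$; I would read it as the intended $\big|\mathcal{G}\setminus\bigcup_{j\in\mathcal{N}_i}\mathcal{B}_j\big|\ge|\mathcal{N}_i|$, i.e.\ there remain at least $|\mathcal{N}_i|$ goals that no agent in the neighborhood is banned from. Modulo that correction the argument is routine: the ``hard part'' is simply making the reduction to Hall's theorem precise and being careful about which of \eqref{eqn:p11}--\eqref{eqn:p12} is an equality and which an inequality, since that determines which side of the bipartition must be saturated. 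No quantitative estimate is needed beyond the set inclusion $\Gamma(S)\supseteq\mathcal{G}\setminus\bigcup_{j\in\mathcal{N}_i}\mathcal{B}_j$.
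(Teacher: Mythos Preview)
Your argument is correct, including the diagnosis of the typo: the intended hypothesis is indeed $\big|\mathcal{G}\setminus\bigcup_{j\in\mathcal{N}_i}\mathcal{B}_j\big|\ge|\mathcal{N}_i|$, which is exactly what the paper uses in its own proof. However, you take a heavier route than the paper does. The paper simply defines $\mathcal{V}_i=\mathcal{G}\setminus\bigcup_{j\in\mathcal{N}_i}\mathcal{B}_j$, observes that $|\mathcal{V}_i|\ge|\mathcal{N}_i|$ by hypothesis, and notes that any injection $m_i:\mathcal{N}_i\to\mathcal{V}_i$ immediately yields a feasible assignment satisfying \eqref{eqn:p11}--\eqref{eqn:p13}; no Hall's theorem or total unimodularity is invoked. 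Your Hall-based proof is not wrong, just overkill here: the very inclusion $\Gamma(S)\supseteq\mathcal{V}_i$ that you use to verify Hall's condition already hands you a common codomain large enough for a direct injection. What your approach does buy is generality---it would still go through if the banned-goal sets $\mathcal{B}_k$ were heterogeneous enough that no single $\mathcal{V}_i$ of size $|\mathcal{N}_i|$ existed but Hall's condition nonetheless held---whereas the paper's argument exploits precisely the strong form of the hypothesis to avoid that machinery.
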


\begin{proof}
    Let the set of goals available to all agents in the neighborhood of agent $i\in\mathcal{A}$ be denoted by the set
    \begin{equation} \label{eqn:feasibleGoals}
        \mathcal{V}_i(t) = \{\mathbf{p}\in\mathcal{G} ~|~ \mathbf{p}\not\in\mathcal{B}_j(t), ~ \forall j\in\mathcal{N}_i(t) \}.
    \end{equation}
    Let the injective function $m_i : \mathcal{N}_i(t) \to \mathcal{V}_i(t)$ map each agent to a goal. As $|\mathcal{N}_i| \leq |\mathcal{V}_i(t)|$, the function $m_i$ must always exist and imposes a mapping from each agent to a unique goal.
    
    Since $m_i$ is injective, it satisfies (\ref{eqn:p11}) and (\ref{eqn:p12}). Likewise, $\mathcal{V}_i\subset\mathcal{B}_j^c$ for all $j\in\mathcal{N}_i$, and therefore the imposes mapping satisfies (\ref{eqn:p13}). Therefore, the mapping imposed by the function $m_i$ is a feasible solution to Problem \ref{prb:assignment}.
    %
	%
	%
\end{proof}

For a sufficiently large value of $T$, the convergence of all agents to goals is guaranteed by Theorem  \ref{thm:assignmentConvergence}.

\begin{theorem}[Assignment Convergence] \label{thm:assignmentConvergence}
	Under the assumptions of Lemma \ref{lma:solutionExistance}, for a sufficiently large value of $T$, and if the energy-optimal trajectories for each agent always move toward their assigned goal, then 
	all $i\in\mathcal{A}$ must reach an assigned goal in finite time.
\end{theorem}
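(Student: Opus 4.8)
The plan is to prove the statement in two stages: first, that the conflict‑resolution (goal banning) process terminates after finitely many events; and second, that once no further goals are banned, every agent settles on a fixed goal and reaches it before its arrival deadline $T_i$.

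\emph{Stage 1 (termination of banning).} For each $i\in\mathcal{A}$ and $m\in\{1,2,3\}$, the set $\mathcal{B}^m_i(t)$ in \eqref{eqn:bannedUpdate} is a union over $\tau\le t$, hence non‑decreasing in $t$; thus $\mathcal{B}_i(t)$ is non‑decreasing and contained in the finite set $\mathcal{G}$. Under the hypothesis carried over from Lemma \ref{lma:solutionExistance}, Problem \ref{prb:assignment} stays feasible at all times, so the goals available to $i$ are nonempty and $|\mathcal{B}_i(t)|\le M-1$. Whenever \eqref{eqn:NoConflicts} fails in a neighborhood, the tiebreaker hierarchy of \eqref{eq:indicators}--\eqref{eqn:idAgent} appends the conflict goal $\mathbf{p}_c$ to the banned set of at least one agent, so the integer $\sum_{i\in\mathcal{A}}|\mathcal{B}_i(t)|$ increases by at least one. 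Since it is bounded above by $N(M-1)$, only finitely many such events occur, so there is a finite time $t^{\dagger}$ after which no $\mathcal{B}_i$ changes; consequently each $T_i$, which is reset only when $|\mathcal{B}_i|$ grows, is constant for $t\ge t^{\dagger}$. Because all tiebreaker metrics are perfectly observed (Assumption \ref{smp:perfect}) and banned sets are broadcast within neighborhoods, every agent agrees on the outcome of each conflict, so for $t\ge t^{\dagger}$ condition \eqref{eqn:NoConflicts} holds for all agents and their local assignments are mutually consistent.

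\emph{Stage 2 (settling and arrival).} I would then show only finitely many recomputations of Problem \ref{prb:assignment} occur after $t^{\dagger}$. A recomputation for agent $i$ is triggered only when $|\mathcal{N}_i|$ changes, i.e., when some separating distance $r_{ij}(t)$ crosses the sensing radius $h$. Using the theorem's hypothesis that each energy‑optimal trajectory moves monotonically toward its current goal, together with the facts that (i) the banned sets are fixed after $t^{\dagger}$, (ii) there are finitely many admissible (neighborhood, banned‑set) configurations and hence finitely many assignments any agent can hold, and (iii) the energy‑optimal arcs are smooth between boundary conditions, I would argue that each $r_{ij}(t)$ is eventually convergent, hence crosses the fixed level $h$ only finitely often (the non‑generic equality $\|g_i-g_j\|=h$ being removable by an arbitrarily small perturbation of $h$ consistent with \eqref{eqn:sensingCondition}). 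Therefore every neighborhood, and hence every agent's assignment, stabilizes at some finite $t^{\ast}\ge t^{\dagger}$. From $t^{\ast}$ onward agent $i$ moves monotonically to a fixed goal with a fixed deadline $T_i$; choosing the parameter $T$ to exceed the worst‑case time to traverse the operating domain and come to rest under \eqref{eqn:vBounds}--\eqref{eqn:uBounds} makes $T_i-t^{\ast}$ sufficient, so agent $i$ arrives by the finite time $T_i$. Since $\mathcal{A}$ is finite, all agents reach an assigned goal in finite time.

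\emph{Main obstacle.} The delicate point is the beginning of Stage 2: unlike $\sum_i|\mathcal{B}_i|$, the reassignment process has no obvious bounded monotone potential, and in principle neighborhoods could oscillate indefinitely. Making ``$r_{ij}(t)$ is eventually convergent'' rigorous requires exploiting the analyticity/polynomial structure of the energy‑optimal trajectories together with a last‑reassignment (minimal‑counterexample) argument over the finite set of possible assignments, so as to break the mild circularity between ``assignments are fixed'' and ``each trajectory heads to a fixed target.'' If one instead takes as given---as the construction implicitly does---that no goal is reassigned once all banned sets are frozen, this obstacle vanishes and Stage 2 reduces immediately to the deadline estimate for $T$.
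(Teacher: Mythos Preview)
Your Stage~1 argument (the monotone potential $\sum_{i\in\mathcal{A}}|\mathcal{B}_i(t)|$ is bounded by $N(M-1)$ and strictly increases at each banning event) is clean and correct, and in fact sharper than anything the paper writes down explicitly. The difficulty you identify in Stage~2 is real, and you do not close it: after $t^{\dagger}$, reassignments are triggered by neighborhood changes, neighborhood changes are driven by trajectories, and trajectories depend on the current assignment---so ``$r_{ij}$ is eventually convergent'' is exactly the circular claim you flag. Appealing to analyticity of the unconstrained arcs does not help, because the relevant trajectory is piecewise, with pieces indexed by the very reassignment events you are trying to bound.

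The paper sidesteps this circularity with a different decomposition. Rather than first freezing the banned sets and then arguing about neighborhoods, it fixes an agent~$i$ and studies the sequence $\{g_n\}$ of goals assigned to~$i$ directly. Since $\{g_n\}$ takes values in the finite set~$\mathcal{F}$, it is either eventually constant or visits some goal infinitely often; in the latter case there are times $t_1<t_2<t_3$ with $g$ assigned at $t_1,t_3$ and some $g'\neq g$ assigned at~$t_2$. Chaining the optimality inequalities at $t_1,t_2,t_3$ with the monotone-approach hypothesis between them yields
\[
\|\mathbf{p}_i(t_1)-g'\| > \|\mathbf{p}_i(t_1)-g\| > \|\mathbf{p}_i(t_2)-g\| > \|\mathbf{p}_i(t_2)-g'\| > \|\mathbf{p}_i(t_3)-g'\|,
\]
so the distance from~$i$ to \emph{every} goal in the recurrent set strictly decreases across each cycle. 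The paper then argues this forces the recurrent goals to be arbitrarily close, contradicting~\eqref{eqn:formationSpacing}. This distance-chain argument is the missing ingredient in your Stage~2: it uses the ``always moves toward its assigned goal'' hypothesis \emph{quantitatively}, not just to assert eventual convergence of $r_{ij}$, and it never needs to reason about neighborhood crossings at all.

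In short: your potential-function idea handles the banning phase nicely, but to finish you need the paper's per-agent sequence analysis (or an equivalent Lyapunov-type argument on distances to the recurrent goal set) rather than an attempt to stabilize neighborhoods directly.
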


\begin{proof}
    Let $\{g_n\}_{n\in\mathbb{N}}$ be the sequence of goals assigned to agent $i\in\mathcal{A}$ as designated by the solution of Problem \ref{prb:assignment}.  From Lemma \ref{lma:solutionExistance}, $\{g_n\}_{n\in\mathbb{N}}$ is not empty, and the elements of this sequence are integers bounded by $1\leq g_n \leq |\max{\mathcal{F}}|$. Thus, the range of this sequence is compact and must be (1) finite, ($2$) convergent, or ($3$) periodic.
    
    ($1$) For a finite sequence, $T_i$ is bounded by $T \cdot |\mathcal{G}|$.
    
    ($2$) Under the discrete metric, an infinite convergent sequence requires that there exists $N\in\mathbb{N}_{>0}$ such that $g_n = p$ for all $n>N$ for some formation index $p\in\mathcal{F}$. This reduces to case $1$, as $T_i$ does not increase for repeated assignments to the same goal.
    
    ($3$) By the Bolzano-Weierstrass Theorem, an infinite non-convergent sequence $\{g_n\}_{n\in\mathbb{N}}$ must have a convergent subsequence, i.e., agent $i$ is assigned to some subset of goals $\mathcal{I}\subseteq\mathcal{G}$ infinitely many times with some number of intermediate assignments for each goal $\mathbf{g}\in\mathcal{I}$. From the construction of the banned goal set, we must have $\mathcal{I}\bigcap\mathcal{B}_i(t) = \emptyset$ for all $t\in[0,T_i]$. This implies that, by the update method of $T_i$, the position of all goals, $g(t)\in\mathcal{I}$ must only be considered at time $T_i$, which we denote as $\mathbf{g}(T_i)\in\mathcal{I} = \mathbf{g}\in\mathcal{I}$.
    
    This implies that the goals available to agent $i$, i.e., $\mathcal{I} = \mathcal{G}\setminus\mathcal{B}_i$, must be shared between $n>0$ other periodic agents. This implies at some time $t_1$ that a goal, $\mathbf{g}\in\mathcal{I}$, must be an optimal assignment for agent $i$, a non optimal assignment at time $t_2>t_1$ and an optimal assignment at time $t_3>t_2$. This implies the distance between agent $i$ and goal $\mathbf{g}$ satisfy
    \begin{align}
        |\mathbf{p}_i(t_1) - \mathbf{g}| &< |\mathbf{p}_i(t_1) - \mathbf{g'}|, \\
        |\mathbf{p}_i(t_2) - \mathbf{g'}| &< |\mathbf{p}_i(t_2) - \mathbf{g}|, \\
        |\mathbf{p}_i(t_3) - \mathbf{g}| &< |\mathbf{p}_i(t_3) - \mathbf{g'}|,
    \end{align}
    for some goal $g'\in\mathcal{I}, ~ g'\neq g$. Agent $i$ must not increase his distance from his assigned goal, which implies
    \begin{align}
        |\mathbf{p}_i(t_1) - \mathbf{g}| &> |\mathbf{p}_i(t_2) - \mathbf{g}|, \\
        |\mathbf{p}_i(t_2) - \mathbf{g'}| &> |\mathbf{p}_i(t_3) - \mathbf{g'}|,\\
    \end{align}
    and hence
    \begin{align}
        |\mathbf{p}_i(t_1) - \mathbf{g'}| & > |\mathbf{p}_i(t_3) - \mathbf{g'}|,
    \end{align}
    which is satisfied for all goals $g'\in\mathcal{I}$. This is only possible if agent $i$ simultaneously approaches all goals in $\mathcal{I}$, which implies they are arbitrarily close. This contradicts \eqref{eqn:formationSpacing}, and thus no such periodic behavior may exist.

\end{proof}

\subsection{Optimal Trajectory Generation}

After the goal assignment is determined, each agent must generate a collision-free and energy-optimal trajectory to their assigned goal. 
The initial and final condition constraints for any agent $i\in\mathcal{A}$ are given by
\begin{align}
	\Big(\mathbf{p}_i(t_0) - \mathbf{p}_{i,0},\, \mathbf{v}_i(t_0) - \mathbf{v}_{i,0}\Big) &= \Big(\mathbf{0},\, \mathbf{0}\Big),
	\label{eqn:ICs}\\
	\Big(\mathbf{p}_i(T_i) - \mathbf{p}_{i,f},\, \mathbf{v}_i(T_i) - \mathbf{v}_{i,f}\Big) &= \Big(\mathbf{0},\, \mathbf{0}\Big),
	\label{eqn:BCs}
\end{align}
where 
$\mathbf{p}_{i,f}, \mathbf{v}_{i,f}$ are the result of solving Problem \ref{prb:assignment}.
To resolve the coupling introduced by collision avoidance, each agent $i\in\mathcal{A}$ predicts the optimal trajectories of its neighbors, $j\in\mathcal{N}_i$ to select its prescribed trajectory.
\begin{definition}
	The \textit{prescribed trajectory}, $u_i^a(t)$, is the trajectory assigned to agent $i$ after solving for the optimal trajectories of every agent in its neighborhood, $j\in\mathcal{N}_i$.
\end{definition}
For agent $i$ to calculate its prescribed trajectory, $u_i^a(t)$, the trajectory optimization problem must be solved over the set
\begin{equation*}
	\mathbf{U}_i(t) = \big\{u_j(t) : j\in\mathcal{N}_i\big\},
\end{equation*}
such that
\begin{equation*}
	u_i^a(t) = u_i(t) \in \mathbf{U}_i(t).
\end{equation*}
This can be achieved by the quadratic optimization problem given by:

\begin{problem}[Trajectory Generation] \label{prb:trajectory}
	For each agent $i\in\mathcal{A}$, we have
		\begin{align}
			\underset{\mathbf{u}_i\in\mathbf{U}}{\text{min}} \Bigg\{
			\sum_{j\in\mathcal{N}_i} \int_{\tau=t}^{T_i} ||\mathbf{u}_j(\tau)|| \text{d}t \Bigg\},
		\end{align}
		subject to
		\begin{align*}
			\text{Dynamic constraints }&(\ref{eqn:pDynamics}), (\ref{eqn:vDynamics}), \\
			\text{State and control bounds }&(\ref{eqn:vBounds}), (\ref{eqn:uBounds}), \\
			\text{Collision avoidance }&(\ref{eqn:collisionCondition}),\\
			\text{Boundary constraints }&(\ref{eqn:ICs}), (\ref{eqn:BCs}).\\
		\end{align*}
\end{problem}

Problem \ref{prb:trajectory} can be solved as an iterated quadratic program with a similar conflict framework as Problem \ref{prb:assignment}, where one agent fixes its trajectory and others steer to avoid it. 

Problems \ref{prb:assignment} and \ref{prb:trajectory} are solved sequentially at time $t=0$ to achieve an initial set of assignments and corresponding optimal trajectories. As both optimizations only use local information, each agent must resolve each problem whenever their neighborhood changes. This ensures that every agent is using all available information to optimize their trajectories while guaranteeing collision avoidance.

\begin{remark}\label{rmk:centralized}
	The solutions of Problems \ref{prb:assignment} and \ref{prb:trajectory} reduce to the centralized case as $h \to \infty.$	
\end{remark}
Remark \ref{rmk:centralized} relies on the fact that, as $h \to\infty$, it must be true that $\mathcal{N}_i(t_0) = \mathcal{A}$ and $\mathcal{B}_i(t_0) = \emptyset$   $\forall i\in\mathcal{A}$.
Hence, problems \ref{prb:assignment} and \ref{prb:trajectory} simply reduce to each agent solving the centralized problem individually.


\section{Simulation Case Study}\label{sec:simulation}
 
 To give insight into the behavior of the agents a series of simulations were performed in Matlab. Each simulation lasted for $20$ s or until all agents reach their assigned goal, whichever was longer. The centroid of the formation moved with a fixed velocity, while the leftmost and rightmost three goals included additional periodic motion
 
 The minimum separating distance between agents, total energy consumed, and maximum velocity for the unconstrained solutions to Problem \ref{prb:trajectory} are given as a function of the horizon in Table \ref{tab:resultsEnergy}. A graph of each agent's position over time for two cases is given in Fig. \ref{fig:trajectory1} and \ref{fig:trajectory3}.

\begin{figure}[ht]
    \centering
    \includegraphics[width=0.8\columnwidth]{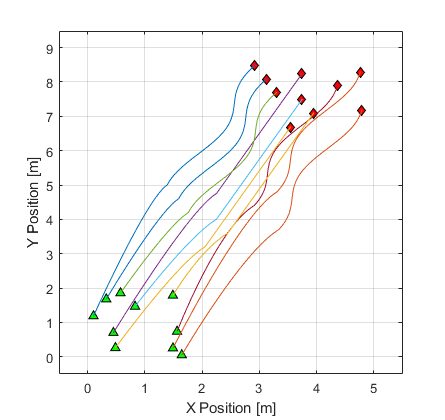}
    \caption{Agent trajectories for the centralized case, here the assignment globally minimizes distance travelled and trajectories are evenly spaced.}
    \label{fig:trajectory1}
\end{figure}
\begin{figure}[ht]
    \centering
    \includegraphics[width=0.9\columnwidth]{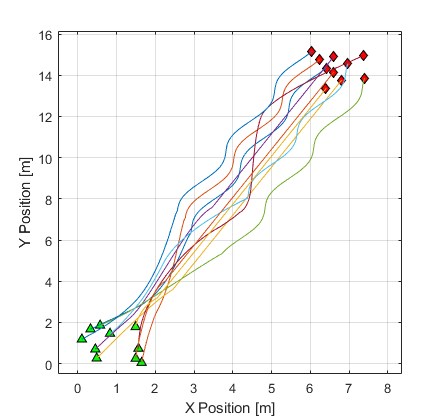}
    \caption{Agent trajectories for $R=0.5$m, one grid cell in diameter. Agents use very little information and tend to group at the nearest goal position.}
    \label{fig:trajectory3}
\end{figure}

\begin{table}[h]
    \centering
    \begin{tabular}{cccc}
     $h$ [m] & min. separation [cm]& $E$ [kJ/kg] & $t_f$ [s] \\\toprule
     $ \inf $ & $16.5$ & $63.77$ & $20$\\
     $ 1.60 $ & $0.82$ & $83.76$ & $27$\\
     $ 1.50 $ & $1.21$ & $56.43$ & $20$\\
     $ 1.40 $ & $0.38$ & $140.7$ & $41$\\
     $ 1.30 $ & $5.25$ & $52.26$ & $20$\\
     $ 1.10 $ & $0.32$ & $96.13$ & $34$\\
     $ 0.95 $ & $0.54$ & $41.61$ & $20$\\
     $ 0.75 $ & $0.60$ & $227.7$ & $42$ \\
     $ 0.50 $ & $2.41$ & $140.1$ & $39$
    \end{tabular}
    \caption{Numerical results for N=M=10 agents and goals, with a time parameter of $T=10$ s and various sensing distances.}
    \label{tab:resultsEnergy}
\end{table}

The results in Table \ref{tab:resultsEnergy} generally show no correlation between energy consumption and sensing horizon. In fact, the minimum energy consumption occurs near $R=1.3$ m rather than the centralized case. This is likely a result of the minimum distance approximation, which does not account for the required change in velocity for a dynamic formation with moving goals.


\section{Conclusion}\label{sec:conclusion}

In this paper, we proposed an approach for solving the desired formation problem of a group of autonomous agents. We presented a  formulation of the formation reconfiguration problem and introduced a concept of prescribed goals and trajectories. The robustness and convergence properties of the system were discussed, and the performance was characterized relative to the centralized approach. A numerical solution was presented for $N=M=10$ agents and goals, and the system performance metrics were compared relative to the sensing radius.

Future areas of research include: relaxing the assumptions on Lemma \ref{lma:solutionExistance} to characterize when solutions exist, incorporating information from outside the neighborhood into goal assignment, analyzing the effect on sensing radius on communication cost versus convergence and propulsion energy, reducing the computational load in calculating Problem \ref{prb:trajectory}, and characterizing the optimality of the tiebreaker heuristics.

\bibliographystyle{IEEEtran}
\bibliography{BibliographyFull.bib}

\end{document}